\newfunction \End {End}
\newconstant \Set {Set}
\newconstant \B {\mathbb{B}}
\newfunction \Prob {\mathbb{P}}
\newfunction \blocks {\mathcal{B}}
\newcommand \cat {\mathop{|\!|}}
\newconstant \msg {\mathcal{M}}
\newconstant \states {\mathcal{X}}
\newconstant \addresses {\mathcal{A}}
\title{Adversarial blockchain queues and trading on a CFMM}
\author{Andrew W. Macpherson}
\begin{document}
\maketitle

\begin{abstract}
We describe a plausible probabilistic model for a blockchain queueing environment in which rational, profit-maximising schedulers impose adversarial disciplines on incoming messages containing a payload that encodes a state transition in a machine.
The model can be specialised to apply to chains with fixed or variable block times, traditional priority queue disciplines with `honest' schedulers, or adversarial public mempools.
We find conditions under which the model behaves as a bulk-service queue with priority discipline and derive practical expressions for the relative block and message number of a transaction.

We study this setup in the context of orders to a CFMM DEX where the execution price a user receives may be quite sensitive to its positioning in the chain --- in particular, to a string of transactions scheduled for prior execution which is not knowable at the time of order creation.
We derive statistical models for the price impact of this order flow both in the presence and absence of MEV extraction activity.
\end{abstract}

\section{Introduction}

In non-cooperative concurrent computation environments it is a fact of life that jobs must wait an unpredictable amount of time, during which an unpredictable number of other programs may execute, before being processed.
The amount of time that a job must wait depends on the algorithm used by the scheduler and the number and types of other jobs attempting to use the system.
Putting concrete figures to this is a task that traditionally falls within the domain of queueing theory, which aims to describe the distributions of quantities such as the \emph{number $N$ of jobs in the queue} and the \emph{wait time} $W$ given assumptions about the arrivals process, the service interval, and the queueing discipline \cite{erlang1909theory,gross2018fundamentals}.
The results may be used by system architects to provide statistical guarantees about server performance or client user experience under different workloads.

In typical queueing scenarios such as packets arriving at a network interface, the wait time may be considered the primary covariate of user experience.
However, in some highly time-sensitive environments the impact may be more complicated. 
For example, consider aiming a weapon in a multiplayer first-person shooter video game: a wait of a single frame between input and execution could mean the difference between a shot hitting its target and missing.
In algorithmic trading, meanwhile, mere microseconds can mean the difference between updating a limit order and having it fulfilled at an unfavourable (stale) price \cite{budish2015high}.

In blockchain execution environments, these issues are particularly pronounced for two reasons: 
\begin{enumerate}
  \item Jobs can be subjected to fairly arbitrary censoring and ordering disciplines by the block producer. 
  Reorderings can range over tens of seconds, break causality, and depend on message contents.
  \item Blockchain systems are typically used to record financial data, whence users may be particularly sensitive to execution outcomes.
\end{enumerate}

To model such environments, we must describe not only the \emph{number} of jobs in the queue, but also their \emph{contents}. Similarly, when modelling client experience associated to a particular message $o$, we must consider not only waiting \emph{time}, but also the number and contents of jobs executed while $o$ is pending, their impact on system state, and the dependence of user utility on final state.

A practical model would have value to blockchain or decentralised application architects, who could use it to provide statistical guarantees to users about executions. It could also be used to price derivatives such as insurance products and middlewares that provide fixed execution prices.

\subsection{A blockchain queueing environment}
In this work, we consider a queueing system with the following structure:
\begin{enumerate}

  \item
    Messages arrive into a message pool (or `mempool') $\bar\msg$ according to some point process.
    
    
  \item
    Messages are periodically batched by a scheduler, removed from the message pool, and appended to a log.
    %
    
  \item
    The scheduler may intelligently \emph{inject messages} into the batch in order to maximise its own operator's utility.
    This leads to the notorious `dark forest' MEV extraction dynamics \cite{robinson2020ethereum}.
    
  \item
    Batches of messages are applied in order to a state machine to produce a state transition $\phi_0\mapsto\Phi_N$.
  
  \item
    Users are supposed to derive some utility depending on the final state $\Phi_N$ --- for example, because balances of some asset are recorded therein.
    
\end{enumerate}
If we pay attention only to the size $N=\#\bar\msg$ of the mempool, we have the classical situation of a bulk service queue --- typically $M/M^\beta/1$ (exponential block times) or $M/D^\beta/1$ (constant block times) in Kendall notation \cite{kendall1953stochastic}.
One analyses this by studying Kendall's \emph{embedded Markov chain} of the mempool size $N_i$ immediately before the $i$th batch.
In the cases of exponentially distributed or constant batch intervals, these methods yield concrete, computable formulae for the moments of the stationary distribution of $(N_i)_{i\in\N}$ \cite{bailey1954queueing}.
These formulae may be useful to blockchain node developers and operators in deciding how much memory to allocate for the mempool buffer, or to blockchain architects in designing block timing schemes.

On the other hand, calculations of client-side quantities --- quantities associated to a particular message in the queue rather than the queue as a whole --- depend on the \emph{queue discipline}, which is the function that pulls messages from $\bar\msg$ and orders them into batches.
This includes quantities relevant to `user experience' such as wait time, number of other jobs executed while waiting, and in the CFMM setting, price impact,
In traditional applications, it is assumed that the system designer has complete control over which discipline is used.
However, in permissionless, distributed blockchain systems, this fails for a number of reasons:
\begin{itemize}
  \item 
    Since the private view of a block producer is not usually independently verifiable, the system designer has little direct control of how it is decided which messages to include into a block.
  \item 
    Although it is possible to enforce rules about block \emph{ordering} (for example through verifiable sequencing rules \cite{ferreira2022credible}), the consensus rules of most major blockchain systems also leave this unspecified.
\end{itemize}
As a result, blockchain queueing disciplines can be `exotic;' for example, they may fail to be `work-conserving' in the sense that the scheduler always packs as many messages as possible from the mempool into the block, and they may depend on message contents.
Classical methods, which tend to focus on FIFO queues \cite{downton1955waiting}, need not apply.

Moreover, unlike in the classical theory, we are primarily interested in the client's \emph{utility} which is a function of the state of a machine (typically some kind of balance sheet) after executing his transaction $o$. 
Jobs scheduled in advance of $o$ impact the state in which it is executed, and hence their number and contents are covariates of user outcomes.

\subsection{Results}

In the first part of the paper, we introduce a queueing model that incorporates the contents of messages acting on a state machine and study information and incentive conditions under which it can be assumed to operate as a bulk service priority queue.
In \S\ref{priority-model}, we show how to derive the distributions of relative block number and position in the block in the limit of diffuse priorities (i.e.~the probability of any two messages having the same priority vanishes).
This limit is realistic in a setting where priorities are determined by fees, the space of plausible fees is very large, and clients apply some randomisation to fee settings (so there are no `focal points' where a fee collision is likely).
The unknowns in the formulas we obtain are readily computable either from real-world on-chain data or from standard $M/G/1$ models.

In \S\ref{markets}, we introduce the CFMM model and show how the priority queueing model can be combined with either an econophysics-style `zero-intelligence' (ZI) model for order flow or with real data of confirmed transactions to produce estimates of the variability of execution prices.
A robust queueing model allows us to do this without collecting mempool data, which may be inconvenient or expensive.

One might na\"ively hope that a realistic model may be obtained by a small perturbation of the ZI case.
However, as practitioners in Ethereum know well, perverse incentives can have non-perturbative effects.
In other words, you can't account for rational behaviour by simply making small adjustments to classical models. 
We exhibit this with a simple model where incentives force arbitrarily bad execution prices due to the possibility, hence necessity, of sandwich attacks.

\subsection{Related work}
\label{other-work}
\paragraph{Blockchain queueing theory}

Several groups of authors have applied queueing models to the case of blockchains to obtain formulae for expected transaction confirmation (i.e.~wait) times. 
In \cite{kasahara2019effect,kawasa2017transaction}, the authors consider a bulk service queue with general service times and priority discipline, a plausible model for `MEV-free' blockchains with priority fees. The formulas they obtain are expressed in terms of constants $\alpha_n$ which we do not know how to compute.
More explicit formulae are obtained in \cite{li2018blockchain,li2019markov} who assume a two stage service model that purports to incorporate network delays; however, their model assumes a FIFO discipline.
Meanwhile \cite{geissler2019discrete} discusses an intricate $GI/GI^N/1$ discrete-time model that allows transactions to have random sizes for the purposes of determining block capacity and validates it against simulations.
For a survey of these and related topics, see \cite{fan2020performance}.

Some authors consider block timing attacks in the context of bitcoin; for example, Markovian models have been applied to study selfish mining \cite{gobel2016bitcoin, javier2020further}.
Such multi-block strategies take us outside the regime where block production is a renewal process, and is outside the scope of our model.

\paragraph{MEV} The prevalence of MEV (maximal extractable value) activities on Ethereum was first brought to the attention of the academic community by \cite{daian2020flash}. See \cite{robinson2020ethereum} for an evocative introduction. 

The best known type of MEV occurs on constant function market maker decentralised exchanges (CFMM DEX), on whose basic dynamics there is a growing literature \cite{angeris2020improved, angeris2022constant, xu2022sok}.
In \cite{goyal2022finding} the authors discuss models of price movement under zero intelligence order flow.
Of particular relevance is \cite{kulkarni2022towards}, which attempts to define a general theory of MEV, including formulae for sandwich profitability --- compare \S\ref{sandwich}.
Meanwhile, \cite{0xblog2022measuring} provides an empirical study that gives some clue as to the actual prevalence of sandwiches, although on a higher abstraction layer than raw CFMMs.

Another thread of research aims to develop novel queueing disciplines adapted to trades on a CFMM, for example via verifiable sequencing rules \cite{ferreira2022credible} or first arbitrage auctions \cite{josojo2022mev,nikete2022towards}.

\subsection{Future directions}
\label{future-directions}
As far as I know, this work is the first attempt to define a probabilistic model of the blockchain mempool that incorporates both traditional queueing and incentive-based dynamics.
There are many directions in which future work could build on this model: empirical studies and more sophisticated modelling of the arrivals process; generalisations in which we drop the absolute time ordering to permit a distributed scheduler set; Markovian models in modelling the dependence of transaction arrivals on the previously observed mempool and state; more sophisticated information models in which the scheduler is blind to message payloads or other features.
Our model would be particularly interesting if it could be applied to quantitatively evaluate and compare modern approaches to `MEV-aware' queue design that have recently started to appear \cite{ferreira2022credible,josojo2022mev,nikete2022towards}.

\paragraph{Acknowledgements}

The author would like to thank Flashbots for funding and supporting this work through the FRP grants programme.\footnote{https://github.com/flashbots/mev-research} Thanks are also due to Quintus Kilbourn, Mohammed Al-Husari, and Aata Hokoridani, for useful conversations and comments.

\section{Model}

Our model is a bulk service queue with general (random) queueing discipline, random message injection, and a representation of queue elements, which are called \emph{messages}, on a state machine. That is, it comprises the following data:
\begin{itemize}

  \item
    A \emph{state machine} $(\mathcal{X},\varrho:\mathcal{M}\rightarrow\End(\mathcal{X}))$ and an \emph{initial state} $\phi_0\in\mathcal{X}$.
    
  \item
    A \emph{message arrivals process}, which is a marked point process $\msg^a\subset\msg\times\R$ of rate $\lambda>0$.
    
  \item
    An i.i.d.~sequence of random block intervals $\{T_n\}_{n\in\N}$ with expectation $\mu^{-1}$.
    
  \item
    A sequence of \emph{random blocks} $\{B_n\}_{n\in\N}$, that is, random ordered finite subsets of $\msg$ with length bounded by some constant $\beta\in\N$.
    
\end{itemize}
In the following sections, we describe various  other data that can be used to describe hypotheses on these processes.

Given an arrivals process and a blocks process, we can construct a \emph{message pool} $(\bar\msg_t)_{t\in\R}$ which is informally defined as follows:
\begin{enumerate}
\item Arriving messages are added to $\bar{\msg}$;
\item Messages are removed from $\bar\msg$ when included into a block.
\end{enumerate}
Blocks may also contain messages not in the pool at the time of block production; these messages are said to be \emph{injected}.
The union of the mempool with the set of injected messages gives the \emph{extended message pool} process $\widetilde\msg$.

From these data we can compute the \emph{state of the system after the $n$th block}, which is a random element of $\states$
\[
  \Phi_n \defeq B_n \cat\cdots\cat B_1 \cdot\phi_0,
\]
the string of messages acting through its representation $\varrho$ (here $\cat$ is the concatenation operator).

Suppose a message $o$ appears in $\bar\msg$ at time $0$, at which time the most recently observed state is $\phi_0$.
We will be interested quantities such as the following:
\begin{itemize}
  \item The (relative) \emph{block number} $K'$ (starting at $0$ for the first block to appear after $t=0$) of the block into which $o$ in included;
  \item The \emph{position} $K''$ of $o$ in the block into which it is eventually included (indexed from $0$);
  \item The (relative) \emph{message number} $K = \beta K' + K''$ (where $\beta$ is the block size); that is, the number of messages applied to $\phi_0$ before $o$;
  \item The state $\Phi_{K'}\in\mathcal{X}$ immediately after the block containing $o$ is executed.
\end{itemize}
The first four quantities are discussed in this section, and the last in the case of a CFMM in \S\ref{markets}.

\begin{remark}[Wait time]

  In this paper we discuss neither the time value of money nor the time evolution of the user's private valuation of on-chain assets, either of which could factor in to a practical utility model via the \emph{time} $W$ until a given message $o$ is included into a block.
  In principle, the distribution of $W$ can be derived from understanding of $K'$ and $T$.

\end{remark}

\begin{remark}[Weaknesses of the model]

  Though quite general, the model sketched above and discussed in more detail below does not incorporate some features of real-life blockchain systems:
  \begin{itemize}

  \item The presentation of the arrivals and block processes assume an absolute time ordering. This is fine with a single block producer (or more generally with all block producers in the same physical location) but unrealistic in the presence of network delays.
  
  \item The model assumes that apart from messages injected \emph{by the block producer}, arrivals are a Poisson process. In reality, other agents may watch the mempool and submit transactions depending on previous arrivals.
  Empirically, these types of transactions actually make up a significant proportion of message volume on DEXes \cite{alexthuniswap}.
  
  \end{itemize}
  
\end{remark}

\paragraph{Notational conventions}

Generally, capital letters denote random variables and their lower case equivalence specific realisations. The cardinality of a set $S$ (resp.~ordered set $B$) is denoted $\# S$ (resp.~$\ell(B)$).
The timescale is taken to be $\R$, though it could be replaced with $(0,\infty)$ without much changing.
We write $X\stackrel{\$}{\in}S$ to mean $X$ is random variable valued in the (measurable) set $S$.

Now let us consider in more detail some of the less standard components of this model.

\subsection{Message pool}

We will need to make use of random countable subsets of a given measurable set $\msg$ or $\msg\times\R$, a.k.a.~simple point processes.
These can be made sense of using random counting measures.
A random subset of a random set $\msg'\stackrel{\$}{\subseteq}\msg$ is a random subset $\msg''\stackrel{\$}{\subseteq}\msg$ such that $\msg''\subseteq\msg''$ almost surely.

When the ambient space is $\msg\times\R$ we may use the following simple-minded approach: a random subset consists of a sequence of tuples $(U_i,M_i)_{i\in\N}$ where $U_i>0$ (called a \emph{holding time}) and $M_i\stackrel{\$}{\in} \msg$.
Such a random subset is \emph{marked Poisson} if $U_i$ are i.i.d.~exponentially distributed random variables.
This hypothesis is ubiquitous in the queueing theory literature.

\paragraph{Messages}
The \emph{message arrivals process} is assumed to be a marked Poisson process $\msg^a\stackrel{\$}{\subset} \msg\times\R$ with rate $\lambda$.
If $I\subseteq\R$ is an interval, write 
\[
  \msg^a_I\defeq \msg^a\cap (\msg\times I)
\]
for the set of messages arriving during $I$. The cardinality of this set has the Poisson distribution with rate $\lambda\cdot|I|$.

The classical theory gives us plenty of tools to make hypotheses and derive conclusions about the arrival holding times $U_i$. This leaves the question of the distributions of the $\msg$-valued discrete time process $\{M_i\}_{i\in\N}$, of which we defer further discussion to \S\ref{markets}.

\paragraph{Blocks}

\begin{definition}[Blocks]
  
  A \emph{block} on a set $S$ is a finite, totally ordered set $(B,\leq)$ together with an injective map $B\rightarrow S$. The number of elements in a block is called its \emph{length}, written $\ell(B)$. The set of blocks on a set $S$ is denoted $\blocks(S)$.

\end{definition}  

The set of blocks on a set $S$ can be decomposed as a disjoint union
\[
  \mathcal{B}(S) = \coprod_{J\subseteq S}\{\text{orderings of }J\} \simeq \coprod_{J\subseteq S}\Sigma_J
\]
of the sets of permutations of all subsets of $S$ (where for the second identification we use some ordering of $S$). This decomposition is useful for constructing distributions on the set of blocks in two stages:
\begin{enumerate}
  \item An \emph{inclusion} distribution, a distribution on the set of finite subsets of $S$.
  \item Conditioned on a given set $J\subseteq S$ of inclusions, an \emph{ordering} distribution on $\Sigma_J$.
\end{enumerate}
If $T\stackrel{\$}{\subseteq} S$ is a random subset, then a \emph{block on $T$} is a random block $B$ on $S$ whose underlying random set is a subset of $S$ (almost surely).

\begin{definition}[Priority]
\label{priority-discipline}

  Suppose $\msg$ comes equipped with a \emph{priority} function $p:\msg\rightarrow\R$.
  A (random) block $B=(M_1,\ldots,M_K)$ on $\bar{\msg}\stackrel{\$}{\subseteq}\msg$ is said to obey
  \begin{itemize}
    \item \emph{priority inclusion} if $\min\{p(M_i)|i\leq K\} \geq \sup_{\bar{\msg}\setminus B}(p) $ almost surely.
    \item \emph{priority order} if $\Prob(p(M_i) < p(M_j)) = 0$ for all $i<j$.
  \end{itemize}
  Note that priority order can be determined without reference to $\bar{\msg}$, but priority inclusion cannot.
  Thus only the former can be enforced as a verifiable sequencing rule \cite{ferreira2022credible}.

\end{definition}
  
\paragraph{Message pool}
The message pool process is a locally constant (cadl\`ag) family of subsets $(\bar{\msg}_t)_{t\in \R}$ of $(\msg^a_{\leq t})_{t\in\R}$ satisfying the following conditions:
\begin{enumerate}
  \item $\bar{\msg}_t = \bar{\msg}_{T_n}\sqcup \msg^a_{[T_n,t]}$ for $t\in [T_n,T_{n+1})$. 
  \item $\lim_{t\downarrow T_n}\bar{\msg}_t = \lim_{t\uparrow T_n}\bar{\msg}_t \setminus (B_n\cap \lim_{t\uparrow T_n}\bar{\msg}_t)$.
\end{enumerate}
These conditions formalise the assumption that messages are removed from the pool when they are included into a block.
The state of the pool just after this operation is $\bar\msg_{T_n}$.
Note, however, that we have not assumed that the block consists entirely of messages from the pool.
We return to this feature in \S\ref{message-injection}.

The message pool process can be recovered from $\msg^a$ and the block arrivals process $(T_n,B_n)$ by the formula
\[
  \bar{\msg}_t = \msg^a_{\leq t} \setminus \left ( \bigcup_{\{n|T_n \leq t\}} B_n \cap \msg^a_{\leq t} \right).
\]
We write $\bar{\msg}_n \defeq \lim_{t\uparrow T_n}\bar{\msg}_{T_n}$ for the set of messages in the pool just before the $n$th block.
Following Kendall \cite{kendall1953stochastic}, we then call it the \emph{embedded process} of $(\bar\msg_t)_{t\in\R}$.
Note that it need not be Markovian.

\subsection{Message injection}
\label{message-injection}

At each block time $T_n$, the block is produced by some function called the \emph{scheduler}.
As well as drawing messages from the message pool at time $T_n$, the scheduler may introduce their own messages.
In the terminology introduced by \cite{wolff1970work}, our blockchain queue may fail to be \emph{work-conserving}, i.e.~the scheduler always includes as many messages as possible from the mempool into the block. Injected messages take up additional `work' (blockspace) not entailed by the arrivals process alone.

\begin{remark}

  In applications it is also pertinent, but beyond the scope of this paper, to consider the scheduler as a random algorithm (rather than merely considering its output as a random variable) so that one can make hypotheses on its complexity. Compare \cite[\S3]{ferreira2022credible}.
  
\end{remark}

The message pool and blockchain together also define another discrete time process
\begin{equation}
  \widetilde{\msg}_n \defeq \bar{\msg}_n \cup B_n
\end{equation}
which we call the \emph{extended} message pool process.
Elements of the difference $\widetilde\msg_n\setminus\bar\msg_n$ are \emph{injected} messages.
Injected messages are qualitatively different from arriving messages because by definition they always make it into the block.
Nonetheless, we might hope in some cases to derive the set of injected messages from some queue process to which we can apply classical models.

If $N_{\mathrm{all},n}=\#\widetilde\msg_n$ were Poisson distributed with rate $\lambda_\mathrm{all}T_n$ where $T_n$ is the $n$th block interval, then it would be distributed as the embedded process of an $M/GI^\beta/1$ queue and hence the server-side quantities ($\#\widetilde\msg$, busy periods, clearing times) could be computed with an $M/GI^\beta/1$ model.
Because of the bound on block size, this cannot literally be the case, but for sufficiently small $\lambda/\mu\beta$ it may sometimes be a reasonable approximation:

\begin{example}[Na\"ive scheduler]
\label{naive-scheduler}

  Suppose that block times are constant and the scheduler of block $B_n=(M_1,\ldots M_{\ell(B_n)})$ is \emph{na\"ive} in that the predicates $M_k\in \bar\msg$ are i.i.d.~(Bernoulli) random variables.
  That is, each message position in the block has an equal probability $p$ of holding an injected messages, and these events are all independent.
  Then the number of injected messages is $\sim\mathrm{Binom}(\beta,p)$.
  If $\beta$ is large, this is approximated by a Poisson distribution with rate $\beta p$.
  Hence the server-side quantities can be computed in terms of an $M/D^\beta/1$ model \cite{bailey1954queueing}.
  
  A na\"ive scheduler operator might be someone who runs the reference implementation of the blockchain node software and interacts with it as an ordinary client.
  Alternatively, it may be a scheduler operating in an environment where it cannot choose which transactions are included into blocks --- for example, because of some cryptographically enforced randomisation scheme.
  
\end{example}

It may also be pertinent to restrict the set of messages the scheduler is allowed to inject to some subset $\msg_n^i\subseteq\msg$, say, of messages with authorisation associated to some element of a fixed set of keys.
More generally, since the operator constructing a given block may be selected by a random election, we may ask $\msg_n^i$ to be a random subset.

\begin{example}

  Assume the accounts + token balances model (ex.~\ref{token-balance-model}) and that all transactions are value transfers (\ref{value-transfer}). Suppose there is a set $\mathcal{S}$ of scheduler operators and that to each element $s\in \mathcal{S}$ there is associated a set $\mathcal{A}_s\subseteq\mathcal{A}$ of addresses.
  Then an $\mathcal{S}$-valued random variable $S$ yields a random subset $\msg_S\subseteq \msg$ consisting of the value transfers with sending address in $\mathcal{A}_S$.
  
\end{example}

\subsection{State} 
A \emph{state machine} $(\mathcal{X},\varrho:\mathcal{M}\rightarrow\End(\mathcal{X}))$ consists of the data of a set (or more structured object such as a vector space) $\mathcal{X}$ and a representation on $\mathcal{X}$ by some set $\msg$. 
The elements of $\mathcal{M}$ are called \emph{messages} and $\varrho(m)$ is called the \emph{transaction payload} of $m$. 

We suppose given a $\sigma$-algebra on $\msg$ and a compatible $\sigma$-algebra on a subset of $\End(\mathcal{X})$ containing the image of $\varrho$.
This allows us to talk about random messages and random transactions.

\begin{example}[Key-value store]

  State has the structure of a \emph{key-value store} so that we have a decomposition
  \[
    \mathcal{X} = \prod_{k\in K}\mathcal{X}_k
  \]
  (where $K$ is the set of keys and $\mathcal{X}_k$ the set of possible values associated to a given key $k\in K$).
  
  This structure allows us to formulate conditions like ``the function $v:\mathcal{X}\rightarrow \Omega$ depends only on fields $k\in K_0\subseteq K$,'' meaning that $v$ factors through the projection $\mathcal{X}\rightarrow\prod_{k\in K_0}\mathcal{X}_k$.
  
\end{example}

In the rest of the paper, we will in fact assume that $\mathcal{X}$ has the structure of a key-value store. We will make hypotheses on the structure of certain fields $\mathcal{X}\rightarrow\mathcal{X}_i$, tacitly assuming we are given a product decomposition $\mathcal{X}=\mathcal{X}_i \times \mathcal{X}_{-i}$. This allows us to specify a transaction on $\mathcal{X}$ by specifying one on $\mathcal{X}_i$ and stipulating that other fields are left untouched. 


\begin{example}[Accounts model]
\label{accounts-model}
  
  $\mathcal{X} = \mathcal{X}_\ell^\mathcal{A} \times E$ where $\mathcal A$ is a type of 160-bit addresses, $\mathcal{X}_\ell$ is a "local state" type, and $E$ consists of ``environment information'' such as block number.
  
\end{example}

\begin{example}[Token-balance mapping]
\label{token-balance-model}
  
  A toy model for local state is $\mathcal{X}_\ell=[0,\infty)^T$ where $T$ is a fixed set of token types and an element of $L$ is interpreted as a vector of balances.
  State transitions are \emph{conditional balanced translations} parametrised by elements $(r_a)_{a\in\mathcal{A}}$ of $\mathcal{X}_\ell^\mathcal{A}\otimes \R$ with finite support such that $\sum_{i\in A}a_i=0$. 
  The state transition is defined by
  \[
    (x_a)_{a\in\mathcal{A}} \mapsto 
    \left\{\begin{array}{ll} 
      (x_a+r_a)_{a\in\mathcal{A}} & x_a+r_a \geq 0 \quad \forall a\in\mathcal{A} \\ 
      (x_a)_{a\in\mathcal{A}} & \text{otherwise} 
    \end{array}\right. 
  \]
  This model has no mints, burns, or nonlinear derivatives but is still enough to model a swap in an ownerless pool.
  
\end{example}

\begin{example}[Value transfer]
\label{value-transfer}

  A \emph{value transfer} transaction in the token-balance mapping model is a translation parametrised as above by $(r_a)_{a\in\mathcal{A}}$ with support in two addresses $\{S,R\}$, the \emph{sender} and \emph{receiver}, and a single token 
  Value transfers can be described in terms of these three data and the amount to be transferred.
  
\end{example}

\begin{remark}
  
  Intuitively, elements of the set $\mathcal{A}$ could represent individual human agents but also aggregations such as companies or markets. Indeed, to any subset $I\subseteq\mathcal{A}$ we can associate a state vector 
  \[
    \phi_I\defeq \sum_{i\in I}\phi_i \in \mathcal{X}_I\defeq [0,\infty)^{\mathcal{T}}
  \]
  representing the total assets of the collection of agents in $I$.
  
  Concretely, an element of $\mathcal{A}$ could represent a single Ethereum address (which itself could identify an individually held EOA, a multisig wallet, or an AMM, for example), or a set of addresses (representing many addresses held by a single offchain entity or even a set of entities).
  
\end{remark}

\begin{example}[Metadata]

  The representation $\msg \rightarrow \End(\mathcal{X})$ need not be injective. For example, we might have $\msg = \msg_m\times\End(\mathcal{X})$ where $\msg_m$ is a `metadata' type encoding information such as authorisation (signature), timestamp, and fee limits.

\end{example}

\subsection{Preferences}
Suppose that the most recently observed state is $\phi_0\in\mathcal{X}$ and that the next block is being produced by a rational agent with an utility function $U:\mathcal{X}\rightarrow\R$. 
We define the utility of a block $b$ as $U(b)\defeq U(b\phi_0) - U(\phi_0)$.
Then if $b\in\blocks(\msg)$ is a realisation which is \emph{dominated} by some other $b'$ --- that is, $U(b) < U(b')$ --- we should expect that the scheduler never outputs $b$.

\begin{definition}[Rationality]
\label{rational}

  Say that a random block $B$ is \emph{rational} with respect to $U:\states\rightarrow\R$ if any $\mathbb{P}(B=b)=0$ for any $b\in\blocks(\msg)$ which is dominated by some other block.

\end{definition}

Designing the state machine so that node operator's utility is maximised by engaging in desirable behaviour is a basic tenet of permissionless blockchain design.
We now discuss some scenarios under which rationality can be leveraged to produce desirable behaviour.

\begin{example}[When are blocks full?]
\label{positive-utility}

  A message $m$ has \emph{positive utility} given a block $b\stackrel{\$}{\subseteq}\widetilde\msg$ with $\ell(b)<\beta$ if 
  \[
    U(b) < U(b' \cat m\cat b'')
  \]
  for some blocks $b',b''$ such that $b = b'\sqcup b''$ as unordered sets.
  Say $m$ has positive utility (unconditionally) if it has positive utility given any non-full block.
  
  Rational blocks never leave both space in the block and positive utility messages in the extended mempool, though they may leave positive utility messages in the (unextended) mempool if it is more valuable to inject messages.
  If $B$ is a rational block on $\widetilde\msg$ and all elements of $\widetilde\msg$ have positive utility,
  \[
    \Prob(\ell(B)=\beta | \#\widetilde\msg \geq \beta ) = 1.
  \]
  
\end{example}

\begin{example}[Priority inclusion]
\label{priority-inclusion-rational}

  Suppose given a priority function $p:\msg\rightarrow\R$ (Def.~\ref{priority-discipline}) such that for all $m,m'\in\msg$ and blocks $B_\pm\subseteq\msg$,
  \[
    p(m) > p(m') \quad \Rightarrow \quad U(B_-\cat m\cat B_+) > U(B_-\cat m'\cat B_*).
  \]
  We call $p$ \emph{incentive-compatible} w.r.t.~$U$.
  In this case, a rational block always obeys $p$-priority inclusion.

\end{example}

\begin{remark}[Empty blocks and utility]
  
  It may be tempting to assume that all messages that carry a fee have positive utility.
  However, as witnessed by the phenomenon of empty blocks on Bitcoin and Ethereum \cite{gauthier2016why,silva2020impact}, this is not always realistic.
  On programmable platforms like Ethereum, there may be many incentives to censor a message, even if there are no other messages in the mempool to take its place.

\end{remark}

\subsection{Priority model}
\label{priority-model}

In this section, we specialise to the case where $\msg$ comes with an incentive-compatible priority function $p$ as in Def.~\ref{priority-discipline}.
As we have seen, rational blocks are either full or leave no unconditionally positive utility messages in the message pool, and they obey priority inclusion.

Let $o\in\msg$ be a message arriving the the mempool. We discuss the block number $K'$ of the block into which $o$ is eventually included (starting from block number $0$ for the first block after $o$ is issued) and position $K''$ of $o$ in that block (indexed from $0$) in the context of a chain of rational blocks.
Call all messages with higher (resp.~lower) priority than $o$ \emph{high} (resp.~\emph{low}) \emph{priority}, and let $(N_{i})_{i\in\N}$ be the number of high priority messages in the message pool just before the $i$th block.

Suppose also:
\begin{enumerate}
  \item 
    All blocks are rational (Definition \ref{rational})
    
  \item
    Priorities are incentive-compatible (Example \ref{priority-inclusion-rational}).
    
  \item 
    All injected messages have high priority.
    
  \item 
    Both $o$ and any high priority message has unconditionally positive utility (Example \ref{positive-utility}).
    
  \item 
    No other message has exactly the same priority as $o$.
    
    In practice this may be reasonable in applications where fees are defined to a precision of billionths of a dollar and some fee randomisation occurs.
    
  \item
    The numbers $\{S_i\}_{i\in\N}$ of high priority arrivals in each block interval are i.i.d.~random variables. 
     
    This is the case, for example, if high priority arrivals obey a Poisson process.
    
  \item
    The high priority queue counting process $(N_{n})_{n\in\N}$ is strongly stationary.
    
\end{enumerate}

\begin{remark}[Independence of high priority arrivals process from arrival time of $o$]

  The hypothesis on the number of arrivals per block interval implies that the number of arrivals in a given interval is not related to relative position to $o$ (since that is how we index blocks).
  Even accepting that this means the high priority process does not somehow `react' to the appearance of $o$, this may be unrealistic in the case of variable block times because $o$ is more likely to appear in a longer block interval where, at least under standard $GI/GI^\beta/1$ assumptions, we would also expect to see more arrivals.
  Taking proper account of this phenomenon would necessitate special treatment of the first block interval.
  
\end{remark}

By rationality and unconditional positive utility, the high priority queue is work-conserving.
That is, the number of messages removed from the high priority queue at step $n$ is $\min(\beta,N_{n})$, whence the queue length process $(N_{n})_{n\in\N}$ is a random walk with step size $S-\beta$ (where $S$ has the distribution of $S_i$ for any $i$) and barrier at $0$.
In particular, it is homogeneous Markovian.
Let us write
\begin{equation}
  P^I_J \defeq \Prob(N_{n} \in J \mid N_{n-1}\in I) 
\end{equation}
for the Markovian transition probability from event $I\subseteq\N$ to $J\subseteq\N$.

\begin{lemma}
\label{markov-transition-ratio}

  Let $I,J\subseteq\N$ with $I\subseteq[\beta,\infty)$. We have the identity
  \[
    P^I_J = \frac{\sum_{i\in I}\Prob(N=i)\cdot \Prob(i+S-\beta\in J)}
    { \Prob(N\in I) }.
  \]
  
\end{lemma}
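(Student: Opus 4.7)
The plan is to write the conditional probability as a ratio and then decompose the numerator by conditioning on the value of $N_{n-1}$. Concretely, I would start from
\[
  P^I_J = \frac{\Prob(N_n\in J,\, N_{n-1}\in I)}{\Prob(N_{n-1}\in I)}
\]
and, since $I$ is a disjoint union of singletons in $\N$, split the numerator as
\[
  \Prob(N_n\in J,\, N_{n-1}\in I) = \sum_{i\in I}\Prob(N_n\in J\mid N_{n-1}=i)\,\Prob(N_{n-1}=i).
\]
Strong stationarity of $(N_n)_{n\in\N}$ then lets me replace $\Prob(N_{n-1}=i)$ by $\Prob(N=i)$ and $\Prob(N_{n-1}\in I)$ by $\Prob(N\in I)$, bringing the expression into the form claimed up to identification of the one-step transition kernel.

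The second step is to identify that kernel. Here the hypothesis $I\subseteq[\beta,\infty)$ is used essentially: under rationality, priority incentive-compatibility, and unconditional positive utility of high-priority messages, the high-priority queue is work-conserving, so when $N_{n-1}=i\geq\beta$ the scheduler removes exactly $\beta$ messages and
\[
  N_n = N_{n-1} + S_n - \beta \quad\text{on the event } \{N_{n-1}\geq\beta\}.
\]
The i.i.d.\ hypothesis on $\{S_i\}$ makes $S_n$ independent of $\mathcal{F}_{n-1}$ (and in particular of $N_{n-1}$), so
\[
  \Prob(N_n\in J\mid N_{n-1}=i) = \Prob(i+S-\beta\in J),
\]
with $S$ a generic copy of any $S_i$. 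Substituting into the decomposition above yields the stated identity.

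The argument is essentially a one-line application of Bayes together with the Markov structure, and I do not expect any real obstacle: the only point worth flagging is that the hypothesis $I\subseteq[\beta,\infty)$ is precisely what is needed to make the transition from state $i$ take the clean additive form $i+S-\beta$, since otherwise $\min(\beta,i)=i$ rather than $\beta$ and the right-hand side would need to be corrected by a separate contribution from indices $i<\beta$.
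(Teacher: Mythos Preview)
Your proposal is correct and follows essentially the same route as the paper: the paper's proof is the one-line observation that $I\subseteq[\beta,\infty)$ means $N_{n-1}\in I\Rightarrow N_n=N_{n-1}+S-\beta$, so the process behaves as an unreflected random walk on this event. You have simply unpacked this into the explicit Bayes decomposition, the use of stationarity for the marginals, and the independence of $S_n$ from $N_{n-1}$.
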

\begin{proof}

  The hypothesis on $I$ implies that $N_n\in I\Rightarrow N_{n+1}=N_n+S-\beta$, so we may treat $N$ as a random walk (without worrying about the barrier).\qedhere

\end{proof}

\begin{proposition}[Distribution of message number]
\label{message-number-distribution}

  Under the hypotheses 1--7, we have
  \begin{align}
    \Prob(K'=0) &= \Prob(N< \beta) \\
    \Prob(K'=k) &= \Prob(N<\beta)\cdot \left(1-P^{\geq\beta}_{<\beta}\right)^{k-1}\quad k>0.
  \end{align}
  In particular, the restricted random variable $K'-1$ given ${K'>0}$ is geometrically distributed with failure probability $P^{\geq\beta}_{\geq\beta}$.
  
  If, moreover, blocks obey priority ordering, then the distribution of $K''$ given $K'=0$ (in which case $K=K''$) is as follows:
  \begin{align}
    \Prob(K''|_{K'=0}= k) &= \frac{\Prob(N=k)}{\Prob(N<\beta)}  \label{K''-dist-0} \\
   \Rightarrow \qquad \Prob(K=k) &= \Prob(N=k) \quad k=0,\ldots,\beta-1.
  \end{align}
  Meanwhile, $K'$ and $K''$ are conditionally independent given $K'>0$, and the distribution of $K''$ is given by:
  \begin{align}
    \Prob(K''|_{K'>0}= k) &= \frac{P^{\geq\beta}_k}{P^{\geq\beta}_{<\beta}}  \\
    &= \frac{\sum_{i=0}^{k} \Prob(N=i+\beta)\cdot \Prob(S=k-i)} 
      {\sum_{i=0}^{\beta-1} \Prob(N=i+\beta)\cdot \Prob(S<\beta-i)}.
  \end{align}
\end{proposition}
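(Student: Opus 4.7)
The plan is to translate everything into a first-passage question for the stationary Markov chain $(N_n)$. The key observation from the hypotheses is that $o$ is included in block $n$ if and only if $N_n<\beta$: priority inclusion together with hypothesis 5 (no tied priorities) means all $N_n$ strictly higher-priority messages must be included before $o$ can be, while work-conservation (from rationality, incentive compatibility and positive utility, hypotheses 1--2, 4) ensures they actually are. By priority ordering, the position of $o$ inside its block is exactly $N_{K'}$. So $K'=\inf\{n\geq 0:N_n<\beta\}$ and $K''=N_{K'}$.

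Since $(N_n)$ is a homogeneous, stationary Markov chain, the law of $K'$ is a standard first-passage computation. The case $K'=0$ just reads off stationarity: $\Prob(K'=0)=\Prob(N_0<\beta)=\Prob(N<\beta)$. For $k\geq 1$, I would decompose
\[
  \Prob(K'=k)=\Prob(N_0\geq\beta,\ldots,N_{k-1}\geq\beta,N_k<\beta)
\]
and apply the Markov property, using that from any state in $\{\geq\beta\}$ the one-step probability of staying in $\{\geq\beta\}$ is $1-P^{\geq\beta}_{<\beta}$ while that of jumping to $\{<\beta\}$ is $P^{\geq\beta}_{<\beta}$. Iterating gives the geometric form of the tail, which in particular identifies $K'-1\mid K'>0$ as geometric with the stated failure probability.

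For $K''$ I would split on $K'=0$ versus $K'>0$. In the former case $K''=N_0$ and conditioning on $N_0<\beta$ immediately yields $\Prob(N=k)/\Prob(N<\beta)$, from which the unconditional identity $\Prob(K=k)=\Prob(N=k)$ on $k<\beta$ follows by multiplying by $\Prob(K'=0)$. In the latter case, the Markov property collapses the conditional law of $N_{K'}$ given the entire history to its law given only $\{N_{K'-1}\geq\beta,\,N_{K'}<\beta\}$; by stationarity this law is the same for every $k\geq 1$, which simultaneously delivers the conditional independence of $K'$ and $K''$ given $K'>0$ and identifies the conditional law of $K''$ as $P^{\geq\beta}_{\{\cdot\}}/P^{\geq\beta}_{<\beta}$. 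Lemma \ref{markov-transition-ratio} then expands numerator and denominator into the explicit convolution sums involving $\Prob(N=\cdot)$ and $\Prob(S=\cdot)$.

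The main obstacle is purely bookkeeping rather than mathematical: ensuring each step cites exactly the hypothesis it needs (priority inclusion for the inclusion criterion, priority ordering for $K''=N_{K'}$, work-conservation for the random-walk description of $(N_n)$, stationarity for the collapse of the $k$-dependence), and extracting the conditional independence of $K'$ and $K''$ given $K'>0$ from the Markov property cleanly rather than by ad-hoc manipulation of joint probabilities.
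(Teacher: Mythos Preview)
Your proposal is correct and follows essentially the same route as the paper: both reduce the question to the first-passage time of the stationary Markov chain $(N_n)$ below $\beta$, invoke the Markov property to collapse the long conditioning $\{N_0\geq\beta,\ldots,N_{k-1}\geq\beta\}$ to $\{N_{k-1}\geq\beta\}$, identify $K''$ with $N_{K'}$ via priority ordering, and finish with Lemma~\ref{markov-transition-ratio}. Your framing as a first-passage problem and your explicit handling of the conditional independence of $K'$ and $K''$ given $K'>0$ are slightly more articulated than the paper's, but the substance is identical.
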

\begin{proof}

  Since rational blocks obey priority inclusion, for the purposes of computing the block number of $o$, we may discard low priority messages.
  
  Regardless of arrival time, $o$ gets into a given block $B_i$ iff it does not get into any previous block $B_j$, $j=0,\ldots,i-1$ and there is space left over after all high priority transactions are packed into $B_i$. 
  That is,
  \begin{align*}
    \Prob(o\in B_i) &= \Prob(o\in B_i\mid o\not\in B_{j},\;j=0,\ldots,i-1) \\
    &= \Prob(N_i < \beta \mid N_j\geq\beta\;j=0,\ldots,i-1) \\
    &= \Prob(N_i < \beta \mid N_{i-1}\geq\beta) \qquad \text{(Markov property)}
  \end{align*}
  for each positive integer $i$, and
  \[
    \Prob(o\in B_0) = \Prob(N<\beta)
  \]
  since $o\not\in B_{-1}$ by construction.
  This proves the first part.
  
  Now suppose blocks obey priority ordering. 
  Then $K''$ is the number of high priority messages in $B$ given $o\in B$, which satisfies
  \begin{align*}
    \Prob(K''=k\mid K'=0) &= \Prob(N=k|N < \beta) = \frac{\Prob(N=k)}{\Prob(N<\beta)}  \\
    \Prob(K''=k\mid K'>0) &= \Prob(N_{i}=k\mid N_{i}<\beta, N_{i-1}\geq\beta) \\
    &= \frac{\Prob(N_{i}=k\mid  N_{i-1}\geq\beta)}
    {\Prob(N_{i}<\beta \mid N_{i-1}\geq\beta)}
  \end{align*}
  for $k=0,\ldots,\beta-1$.
  The last equality follows from Lemma \ref{markov-transition-ratio}.
  \qedhere

\end{proof}

\begin{corollary}
\label{message-number-finiteness}

  The distribution of $K$ is completely determined by either of the following finite lists of numbers:
  \begin{itemize}
    \item $\Prob(N = n)$ for $n=0,\ldots,2\beta-1$ and $\Prob(S=k)$ for $k=0,\ldots,\beta-1$.
    \item $\Prob(N = n)$ for $n=0,\ldots,\beta-1$ and $P^{\geq\beta}_k$ for $k=0,\ldots,\beta-1$.
  \end{itemize}
  
\end{corollary}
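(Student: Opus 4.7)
The plan is to read off from Proposition \ref{message-number-distribution} exactly which quantities are needed to compute $\Prob(K=k)$ for each $k$, and then use Lemma \ref{markov-transition-ratio} to reduce transition probabilities to probabilities of $N$ and $S$.

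First I would unpack the proposition. Since $K = \beta K' + K''$ and the bounds on $K''$ force $K'$ and the pair $(K',K'')$ to be mutually determined by $K$, it suffices to compute $\Prob(K'=k', K''=k'')$ for every $k' \in \N$ and $k'' \in \{0,\ldots,\beta-1\}$. On the event $\{K'=0\}$ the proposition gives $\Prob(K=k'') = \Prob(N=k'')$ for $k''<\beta$, which uses only $\Prob(N=n)$ on $\{0,\ldots,\beta-1\}$. On the event $\{K'>0\}$, conditional independence of $K'$ and $K''$ gives $\Prob(K'=k', K''=k'') = \Prob(K'=k')\cdot P^{\geq\beta}_{k''}/P^{\geq\beta}_{<\beta}$, and the marginal $\Prob(K'=k')$ for $k'>0$ is $\Prob(N<\beta)(1-P^{\geq\beta}_{<\beta})^{k'-1}$. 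Collecting inputs, the entire distribution of $K$ is determined by $\Prob(N=n)$ for $n\in\{0,\ldots,\beta-1\}$ together with $P^{\geq\beta}_k$ for $k\in\{0,\ldots,\beta-1\}$ (noting $P^{\geq\beta}_{<\beta} = \sum_{k<\beta}P^{\geq\beta}_k$). This establishes sufficiency of the second list.

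Next I would show the first list determines the second. The only missing piece is $P^{\geq\beta}_k$ for $k<\beta$. Apply Lemma \ref{markov-transition-ratio} with $I=[\beta,\infty)$ and $J=\{k\}$:
\[
  P^{\geq\beta}_k = \frac{\sum_{i\geq\beta}\Prob(N=i)\cdot\Prob(S=k+\beta-i)}{\Prob(N\geq\beta)}.
\]
The summand vanishes unless $k+\beta-i\geq 0$, so $i$ ranges only over $\{\beta,\ldots,\beta+k\}\subseteq\{\beta,\ldots,2\beta-1\}$, and the corresponding values $k+\beta-i$ lie in $\{0,\ldots,\beta-1\}$. Hence the numerator uses only $\Prob(N=n)$ on $\{\beta,\ldots,2\beta-1\}$ and $\Prob(S=m)$ on $\{0,\ldots,\beta-1\}$. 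The denominator $\Prob(N\geq\beta) = 1-\sum_{n<\beta}\Prob(N=n)$ is computable from $\Prob(N=n)$ on $\{0,\ldots,\beta-1\}$. Together with the data already needed for the $K'=0$ branch, this is exactly the first list.

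There is essentially no obstacle here: the whole content is bookkeeping of index ranges in the two formulas of Proposition \ref{message-number-distribution} and the key observation that the truncation $S-\beta\leq k<\beta$ forces a finite window in the random-walk step. The only subtle point worth flagging is to verify that the formulas for $\Prob(K'=k')$ with $k'>1$, which ostensibly involve higher powers of $1-P^{\geq\beta}_{<\beta}$, do not introduce new transition probabilities beyond those on the list — but by the Markov and stationarity hypotheses, the one-step quantity $P^{\geq\beta}_{<\beta}$ is all that appears.
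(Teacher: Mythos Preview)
Your proposal is correct and follows exactly the route the paper intends: the corollary is stated without proof, immediately after Proposition~\ref{message-number-distribution}, and is meant to be read off from those formulas together with Lemma~\ref{markov-transition-ratio}. Your explicit bookkeeping of index ranges---particularly the observation that $S\geq 0$ forces $i\leq \beta+k\leq 2\beta-1$ in the sum for $P^{\geq\beta}_k$---is precisely the content the paper leaves implicit.
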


Corollary \ref{message-number-finiteness} tells us we can build a model of $K$ using information that is likely to be available in practice.
Assume the high priority queue is stable in that $\lambda_\mathrm{high} < \mu\beta$.
Given a priority bound, the numbers $\Prob(N=n)$ can reasonably be estimated empirically for $n=0,\ldots,\beta-1$: by the ergodic theorem, it is the proportion of blocks containing exactly $n$ high-priority messages.
The numbers $P^{\geq\beta}_k$ for $k=0,\ldots,\beta-1$, and hence $P^{\geq\beta}_{<\beta}= \sum_{k=0}^{\beta-1}P^{\geq\beta}_k$, may be similarly measured as the proportion of blocks containing exactly $k$ high priority messages following a block completely filled with high priority transactions.

Alternatively, we may take a theoretical approach to deriving these numbers. If the high priority queue has Poisson arrivals, the distribution of $N$ can be analysed as the stationary queue size of an $M/GI^\beta/1$ work-conserving bulk service queue.
When block times are Erlangian (e.g.~constant or exponential), the moments of this distribution were deduced in \cite{bailey1954queueing}.
In particular, we can obtain exact expressions for the distributions of $N$ and $S$.
    
\begin{example}[Exponential block time]
\label{exponential-block-time}

  If high priority arrivals are Poisson and (as in Bitcoin \cite{kasahara2019effect}) block times are exponentially distributed then $N$ ends up geometrically distributed with failure probability $p$, where $p$ is the unique root in $(0,1)$ of the polynomial
  \[
    \chi(p) = \mu p^{\beta+1} - (\lambda + \mu)p + \lambda,
  \]
  where we abbreviate $\lambda\defeq\lambda_\mathrm{high}$ (cf.~\cite[\S3.2]{gross2018fundamentals}).
  Note that this quantity only depends on $\beta$ and $\lambda/\mu$, the expected number of high priority arrivals per block.
  In particular, $\Prob(o\in B_0) = 1-p^{\beta}$.
  When $\beta$ is large (and $p<1$), the leading order term in $\chi(p)$ is small and so we deduce $p\approx \frac{\lambda}{\lambda+\mu}$, the proportion of events made up of arrivals.
  The approximation is very good for realistic $\beta$ and and moderate $\lambda/\mu$, but the error gets amplified by the exponent $\beta$ in the formula for $\Prob(o\in B_0)$.
  \begin{center}
    \includegraphics[height=30ex]{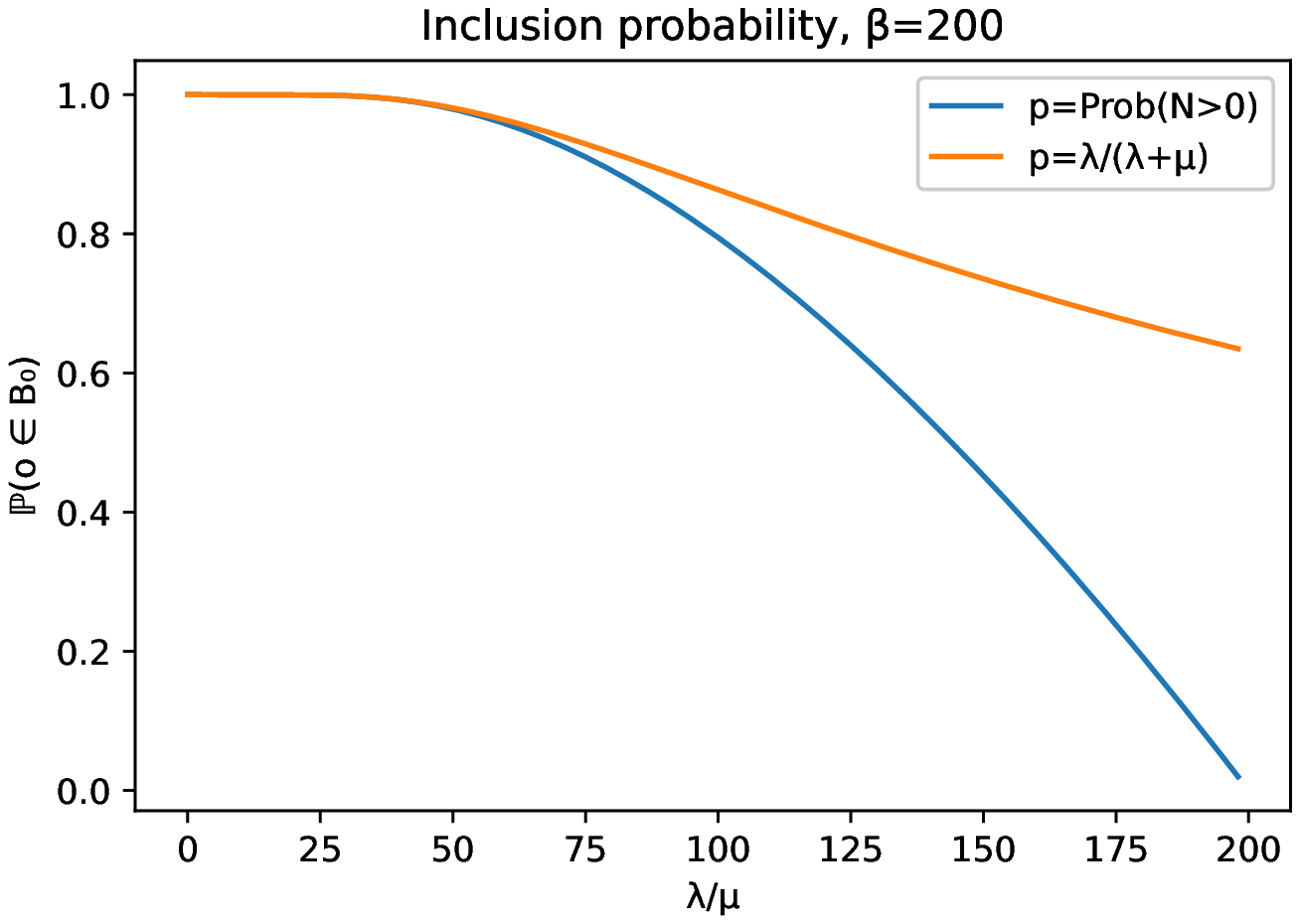}
    \includegraphics[height=30ex]{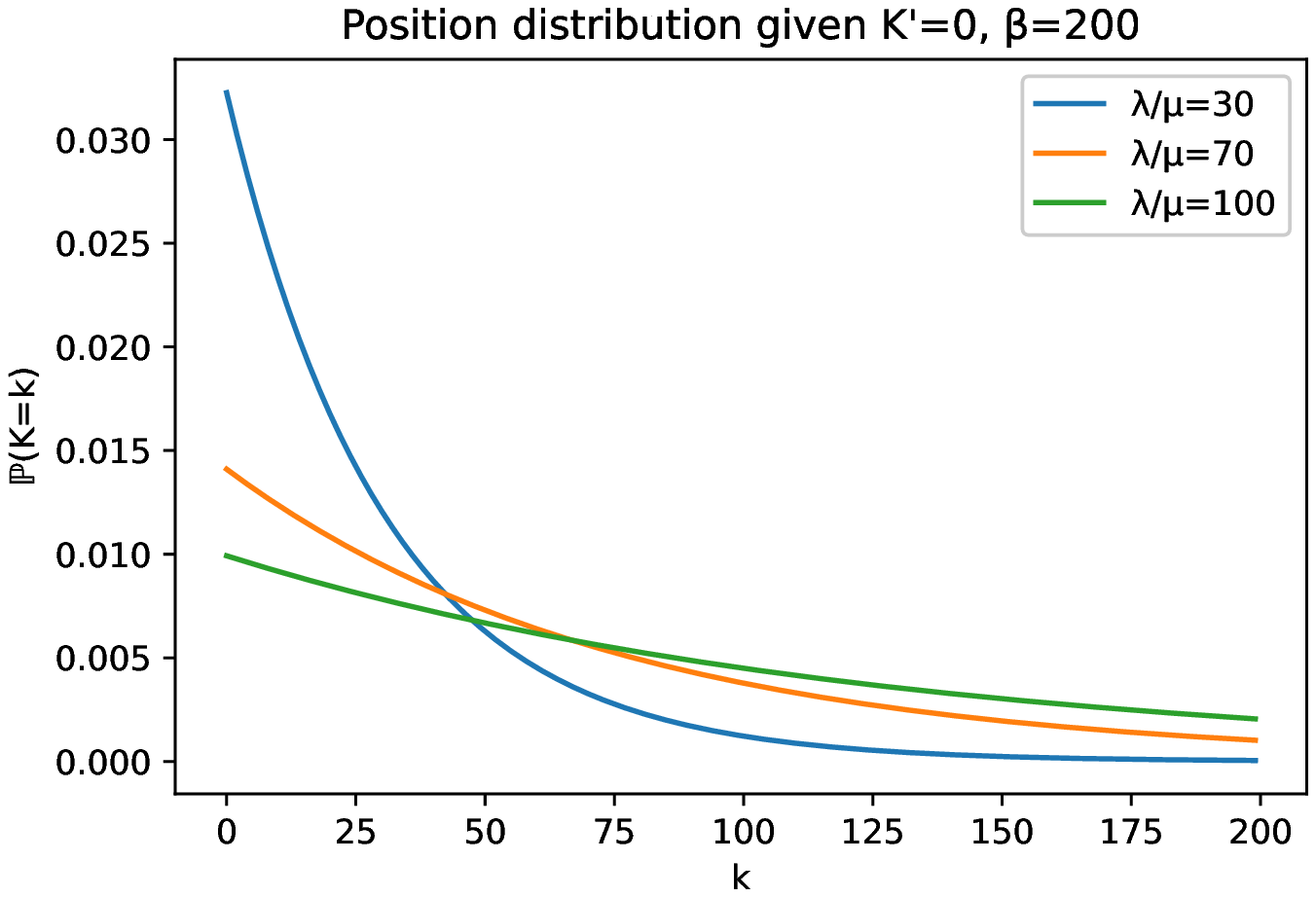}
  \end{center}    
  For the position in the block given inclusion in $B_0$, we obtain
  \[
    \Prob(K''=k \mid K'=0 ) = p^k / (1-p^\beta)
  \]
  for $k=0,\ldots,\beta-1$. In particular, positions earlier in the block are more likely.
  
  When $K'>0$, we have to incorporate the distribution of number of high priority arrivals per block, which under present hypotheses is Poisson distributed with rate $T\lambda$ where $T\sim\mathrm{Exp}(\mu)$.
  This yields a geometric distribution with failure probability $q=\mu/(\lambda+\mu)$ ($\approx 1-p$).
  From this we can derive tractable formulas for the transition probabilities, and hence the distributions of block number and block position given $K'>0$. 
  For example, using the fact that $\Prob(N=\beta+i|N\geq\beta)= \Prob(N=i)$ because $N$ is geometrically distributed, we obtain
  \begin{align}
    P^{\geq\beta}_k &= \sum_{i=0}^{k}\Prob(N=i)\cdot \Prob(S=k-i) = (1-p)(1-q)\sum_{i=0}^{k}p^iq^{k-i} \\
    P^{\geq\beta}_{<\beta} &= \sum_{i=0}^{\beta-1}\Prob(N=i)\cdot \Prob(S<\beta-i) = (1-p)\sum_{i=0}^{\beta-1}p^i(1-q^{\beta-i}).
  \end{align}
  
\end{example}

\begin{remark}

  The Poisson arrivals model may break down substantially for injected or other very high priority messages in the presence of recurring top-of-block opportunities (MEV).
  In this case, a special model is needed to handle low $K''$.

\end{remark}

\section{Markets}
\label{markets}

We now turn to discussing the impact of higher priority messages on the execution price of an order on a constant function market maker (CFMM) DEX.

\paragraph{Trades}

Assume the addresses and token balances model. Suppose fixed an ordered pair of tokens $A,B$ and two addresses $P,C$ (the \emph{originator} and the \emph{counterparty}).
A \emph{trade} $T$ between $P$ and $C$ consists of the data of a vector $T = (t_A,t_B)\in \R^2$ such that $t_A$ and $t_B$ have opposite sign.
It defines a transaction $\tau_T\curvearrowright \states$ that acts as translation by 
\[ \tau(t_A,t_B) = (t_A,-t_A,t_B,-t_B) \]
on $\mathcal{X}_{A,P}\times\mathcal{X}_{A,C}\times\mathcal{X}_{B,P}\times\mathcal{X}_{B,C}$ (and as the identity on all other fields) provided $P$ and $C$ have sufficient balance, or as the identity otherwise.

If the trading parties all have sufficient balance, a string of trades acts as translation by the sum of the corresponding vectors.
In particular, the transactions all commute.
If balances run low, some of the vectors may have to be omitted from the sum, depending on ordering, and commutativity is lost.

\paragraph{Constant function market maker} Constant function market makers (CFMMs) are a class of on-chain applications that provide passive liquidity by accepting trades that preserve a function of the reserves (before fees).
In particular, execution prices are defined in terms of this function and the starting reserves.
CFMMs are perhaps the best-studied and most interacted with entities in DeFi \cite{angeris2020improved, angeris2022constant, xu2022sok,kulkarni2022towards}. We briefly review the basic definitions and notation we will use:

\begin{itemize}
  \item 
    A \emph{CFMM} comprises the data of a twice continuously differentiable function 
    \[
       f:(0,\infty)^2 \rightarrow \R
    \]
    whose partial derivatives with respect to both standard coordinates are strictly positive.
    We will call this function the \emph{invariant}.\footnote{It is also often called the \emph{trading function}, but as this meant something different in some of the early papers on the subject I avoid it as a potential source of ambiguity.}

    Level sets of the conserved quantity $f$ are denoted as
    \[
      W_\Lambda \defeq \{(x,y)\in (0,\infty)^2|f(x,y)=\Lambda\}.
    \]
    
  \item
    We label the axes of $(0,\infty)^2$ by symbols $A$, $B$ which in concrete instantiations will be the names of two tokens. 
    We call $A$ the num\'eraire and express prices in terms of $\mathtt{A\_amount/B\_amount}$. 
    The two coordinate functions are written $r_A$ and $r_B$.
  
  \item
    Positivity of the partial derivatives of $f$ entails that the restrictions to $W_\Lambda$ of $r_A,r_B$ are open immersions with opposite orientation. For simplicity, we assume that these are surjective for $\Lambda$ in the range of $f$.
    Precomposing one of these with the inverse of the other yields an (orientation-reversing) \emph{cost potential}
    \[
      P_\Lambda = r_B\circ ( r_A|_{W_\Lambda})^{-1}:(0,\infty)\rightarrow (0,\infty)
    \]
    which yields the reserves of asset $B$ that balances a given amount of asset $A$ at level $\Lambda$.%
    We will suppress the dependence of $P$ on $\Lambda$ in cases where the expression as a whole does not depend on $\Lambda$ (but beware that this practice is potentially confusing in the presence of trading fees which increase $\Lambda$).

  \item
    The amount of token $B$ received in exchange for $a$ units of token $A$ starting from state $(\phi_A,\phi_B) = (\phi_A,P(\phi_A))$ --- pushing the state to $(\phi_A+a,P(\phi_A+a))$ --- is
    \[
      P(\phi_A) - P(\phi_A+a)
    \]
    units of asset $A$. This value is also called the \emph{exchange function} $F(1)$ (given reserves $\phi$) \cite[\S4.1]{angeris2022constant}.
    From this we can compute the (2-dimensional) \emph{payoff} of the swap for the originator as
    \begin{equation} \label{payoff}
      (-a, P(\phi_A) - P(\phi_A+a)) \in \states_{P,A}\times\states_{P,B}.
    \end{equation}
    If desired, this can be converted into a scalar quantity given a price $p$ for $B$ in terms of $A$ on an external market:
    \begin{equation} \label{payoff-1d}
      -a + p(P(\phi_A) - P(\phi_A+a)).
    \end{equation}
    However, we will be interested in sequences of swaps which are \emph{pure profit} in that both parameters of the 2d payoff are non-negative.
    
  \item
    Infinitesimally, the reciprocal derivative 
    \[
      -1/P_\Lambda' = \frac{df/dr_A}{df/dr_B}
    \]     
    of the cost potential gives the \emph{marginal pricing} on $C$ of $B$ in terms of $A$.
  
\end{itemize}

\begin{example}
\label{cpmm-pricing}

  In the familiar CPMM (constant product) situation \cite{adams2020uniswap}, we have
  \begin{align*}
    f(x,y) &= xy \\
    P_\Lambda(r) &= \Lambda/r 
  \end{align*}
  from which we obtain the marginal pricing function $-P_\Lambda'(r) = \Lambda/r^2$. If $(x,y) = (r, P_\Lambda(r))$ we recover the more familiar form $xy/x^2=x/y$ for marginal price.

\end{example}

An \emph{instance} of the CFMM $f$ on a state machine $\varrho:\msg\rightarrow\End(\mathcal{X})$ consists of an address key $C\in\addresses$ and a pair of tokens $A,B\in\mathcal{T}$ corresponding to the labelling of the coordinate axes of the domain of $f$.
The relevant part of the local state has the form
\[
  \mathcal{X}_C = \mathcal{X}_{A,C}\times\mathcal{X}_{B,C} \cong (0,\infty)^2.
\]

\subsection{Order flow}

\emph{Orders} define elements of $\msg$ that act on $\states$ through trades that \emph{fulfil} the order.
In this paper we will mostly consider market orders, so that the client only specifies the amount on one side of the swap.
The other side is quoted by the CFMM, which always fulfils the order provided the client has enough funds to cover their side.

We adopt the convention that the amount is always specified on the $A$ side (the num\'eraire).
Thus the space $\msg_C$ of possible orders is $(0,\infty)_\mathrm{BUY}\sqcup(0,\infty)_\mathrm{SELL}$.
Here BUY means ``buy $B$ with $A$,'' so a fulfilled BUY order results in $A$ tokens being transferred from the originator to the CFMM (and $B$ tokens being transferred the other way). 
We identify $(0,\infty)_\mathrm{BUY}\sqcup(0,\infty)_\mathrm{SELL}\simeq \R\setminus\{0\}$ with BUY orders on the positive side.

In order to model the state process on our CFMM, we need some hypothesis about the order flow process $(X_i)_{i\in\N}$ (an $\msg_C = \R\setminus\{0\}$-valued process).
One such hypothesis is the `zero intelligence' hypothesis popular in the econophysics literature \cite{gode1993allocative, chakraborti2011econophysics}:

\begin{definition}[Zero intelligence]
\label{zero-intelligence}

  We say that a message arrivals process $(M_n\stackrel{\$}{\in}\msg)_{n\in\N}$ is \emph{zero intelligence} (ZI) if the $M_i$ are sampled i.i.d.~from a common distribution on $\msg$.

\end{definition}

The ZI hypothesis captures the situation where messages are submitted by independent agents without knowledge of the contents or number of messages already submitted, including by themselves.
Each message $M_i$ is a market order on the market $C$ with some fixed probability $p_C$, and this predicate is a sequence of i.i.d.~Bernoulli trials.
If arrivals of all messages (including injected messages) are Poisson with rate $\lambda$, then arrivals of orders on $C$ is Poisson with rate $p_C\lambda$.
Similarly, under the priority model of \S\ref{priority-model}, high priority orders arrive at a rate $p_C\lambda_\mathrm{high}$ (assuming priority is independent of whether or not a given message is an order on $C$).

The size and direction of the market orders $X_0,\ldots$ on $C$ are sampled from a distribution $F_o$ on $\msg_C=\R\setminus\{0\}$.
Again, we suppose the $X_i$ are independent of priority, so we can treat the high priority queue as its own ZI order flow.

Let us also add a couple of other simplifying assumptions:
\begin{itemize}

  \item
    Consider only orders generated by senders who have sufficient balance to cover their trade at time of execution.
    In the case of SELL orders (buying the num\'eraire), the required balance depends on the state of the CFMM at execution time.
    So these orders are really only `zero-intelligence' if the total volume is negligible compared to reserves.
  
  \item
    No liquidity is added or removed from $C$, and the level sets of $f$ do not hit the boundary of $[0,\infty)^2$. Then $C$'s reserves cannot run out.

\end{itemize}

Under these assumptions, given state $\phi_{C,0}\in\states_{C,0} = (0,\infty)^2$ at time of sending an order $o$, the num\'eraire balance of $C$ at the time of execution of $o$ is
\begin{equation}
  \Phi_{C,A,K} = \varrho(M_0)\cdot\cdots\cdot \varrho(M_K)\cdot \phi_{C,A,0} = \phi_{C,A,0} + \sum_{i=0}^{K} X_i;
\end{equation}
the position of a random walk with step size distribution $F_o$ after $K$ steps.

The moments of the num\'eraire reserves can be quite easy to calculate given sufficent information about the input distributions.
For example, if $X$ is a random variable with distribution $F_o$, we have
\begin{equation}
  \mathbb{E}(\Phi_{C,A,K}) = \phi_{C,A,0} + \mathbb{E}(K)\cdot\mathbb{E}(X)
\end{equation}
\begin{equation}
  \sigma^2(\Phi_{C,A,K}) = \sigma^2(K)\mathbb{E}(X)^2 + \mathbb{E}(K)\sigma^2(X)
\end{equation}
by Wald's identity and the law of total variance.

However, due to the form of the cost potential or pricing function (e.g.~as in Example \ref{cpmm-pricing}), it may be difficult to derive analytic expressions for the total reserves or actual execution price of $o$.
In this case, Monte Carlo methods can be used to obtain estimates of the moments.

\begin{example}[Exponential block time, uniform size, CFMM]

  Let us adopt the priority model with exponential block times as in \eqref{exponential-block-time}, so that $\widetilde K$ is geometrically distributed with failure probability $p$ (a number that is quite easily computed in practice from $\lambda_\mathrm{high}$ and $\mu = 1/\mathbb{E}(T)$).
  Let $X$ be a random variable with distribution $F_o$.
  Then we obtain
  \begin{equation}
    \mathbb{E}(\Phi_{C,A,K}) = \phi_{A,C,0} + \frac{p}{1-p} \cdot \mathbb{E}(X)
  \end{equation}
  \begin{equation}
    \sigma^2(\Phi_{C,A,K}) = \frac{p}{(1-p)^2} \cdot \mathbb{E}(X)^2 + \frac{p}{1-p}\sigma^2(X)
  \end{equation}
  To compute $\sigma^2(\Phi_{C,A,K}
  )$, we need to make an ansatz on the mean and variance of $F_o$.
  Under typical martingale assumptions, $\mathbb{E}(X)=0$, so the expectation becomes simply $\phi_{A,C,0}$ and the variance is $p\sigma^2(X)/(1-p)^2$.
  
  In the zero-intelligence literature it is quite common to use uniformly random direction and size with the latter constrained to some interval $[0,L]$ \cite[\nopp VII.B.3]{chakraborti2011econophysics}.
  The following figures were sampled (50 samples, $\beta=200$) using this model against a simple CPMM:
  \begin{center}
    \begin{tabular}{ c cc | c }
    $\phi_0$ & $L$ & $\lambda/\mu$ & $c_v($price$)\cdot 100\%$\\
    \hline
    $(100,100)$&  $1$& $30$ & $5.14\%$ \\ 
    \hline
    $(120,120)$ & $0.1$ & $14$ & \input{assets/cv_120_120_0.1_12} \\  
    \hline
    $(50,150)$ & $1$ & $4$ & $5.30\%$ \\
    \hline
    $(25,35)$ & $1$ & $1$ & $4.56\%$
    \end{tabular}
  \end{center}
  (In the present context, $\lambda/\mu$ is the average number of trades executed on the pool $C$ per block and $c_v=\sigma/\mathbb{E}$ is the coefficient of variation.)
  Together with concentration inequalities, these kinds of figures can be used to provide probabilistic guarantees on the discrepancy between the forecast price, i.e.~price at time of order creation, and execution price. For example, Chebyshev gives us
  \begin{equation}
    \Prob\left(\;\left|\frac{\text{executed price}}{\text{forecast price}}-1\right|\geq 2c_v\;\right) \leq 1/4. 
  \end{equation}
  
\end{example}  

Of course, one could just as well apply these methods with real order flow data (extracted, for example, from an index of \texttt{Swap} events on a Uniswapv2 pool).
The model for $K$ means that we do not have to also collect mempool data to obtain concrete figures.

\subsection{Sandwich trades}
\label{sandwich}

\begin{definition}[Sandwich]

  A \emph{sandwich trade} is a an ordered triple of trades $(\tau_-,\tau_0,\tau_+)$ such that:
  \begin{enumerate}
    \item All three trades have the same counterparty $C$ (the \emph{market});
    \item The outer trades $\tau_\pm$ have the same originator $S$ (the \emph{sandwicher}).
  \end{enumerate}
  The originator of the middle trade is called the \emph{victim}.
  
  The sandwich trade is \emph{feasible} on an initialised AMM $(f,C,\phi_0)$ if it fulfils all three trades in sequence, that is, if
  \begin{enumerate}
    \item $(f,\phi_0)$ fulfils $\tau_-$;
    \item $(f,\phi_0 + \tau_-)$ fulfils $\tau_0$;
    \item $(f,\phi_0 + \tau_- + \tau_0)$ fulfils $\tau_+$.
  \end{enumerate}
\end{definition}

Suppose $\tau_0>0$ (i.e.~it is a BUY order).
If $P$ is convex and the sandwicher $S$ has an $A$-balance of at least $\tau_->0$, the sandwich executes and they get a payout 
\[
  (-(\tau_+\tau_+) ,\, P(\phi_0)-P(\phi_0+\tau_-) + P(\phi_0+ \tau_- + \tau_0) - P(\phi_0+ \tau_- + \tau_0 + \tau_+)).
\]
The sandwich is \emph{pure profit} if both co-ordinates are non-negative.
If we assume $\tau_-=-\tau_+=\epsilon$, this simplifies to
\begin{equation}
\label{sandwich-payoff}
  (0, P(\phi_0)-P(\phi_0+\epsilon) + P(\phi_0+ \epsilon + \tau_0) - P(\phi_0+ \tau_0 ))
\end{equation}
and the pure profit condition simplifies to non-negativity of the second co-ordinate.
Note that in this case, the CFMM ends up in the same state as if $\tau_0$ had not been sandwiched.\footnote{Of course, this observation breaks down in the presence of trading fees.}
We call such a sandwich \emph{memoryless}.
Allowing $\epsilon\rightarrow 0_+$, we find that the marginal memoryless sandwich payoff for a target trade of size $a$ is 
\begin{equation}
\label{sandwich-marginal}
  (0,P'(x+a) - P'(x)).
\end{equation}

\begin{proposition}
  The following conditions on a (not necessarily quasi-concave) CFMM $f:(0,\infty)^2\rightarrow\R$ are equivalent:
  \begin{enumerate}
    \item $f$ is (strictly) quasi-concave at $W_\Lambda$.
    \item $P_\Lambda$ is (strictly) convex.
    \item For any $a>0$ the function $P_\Lambda(x+a)-P_\Lambda(x)$ is (strictly) monotone increasing.
  \end{enumerate}
  In this case, any market order may be strictly pure profitably sandwiched at any CFMM state.
\end{proposition}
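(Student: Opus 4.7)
The plan is to handle the three equivalences and the sandwich claim in turn, leaning on the standard dictionary between quasi-concavity and convex level sets.

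First, for (1) $\Leftrightarrow$ (2), the key observation is that the positivity of the partial derivatives of $f$ means the upper contour set $\{f \geq \Lambda\}$ is locally the epigraph of $P_\Lambda$: since $df/dr_B > 0$, the region $\{f \geq \Lambda\}$ at a given $r_A = x$ is exactly $\{r_B \geq P_\Lambda(x)\}$. Quasi-concavity of $f$ on $W_\Lambda$ means exactly that $\{f \geq \Lambda\}$ is convex in a neighbourhood of $W_\Lambda$, which is the defining property of convexity of $P_\Lambda$ (and strict quasi-concavity corresponds to strict convexity). So I would cite the standard correspondence between graphs, epigraphs, and convexity, check the signs using the assumption on the partials, and identify the strict versions.

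Second, for (2) $\Leftrightarrow$ (3), I would invoke the standard real-analysis fact that a function $P$ on an interval is convex if and only if its forward difference $P(x+a) - P(x)$ is monotone non-decreasing in $x$ for every $a > 0$. The strict versions correspond. (If one wanted a self-contained derivation, one picks three points $x_1 < x_2 < x_3$ and rewrites the convexity inequality in terms of differences; I would likely just cite this as standard.)

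Third, for the sandwich claim, the marginal memoryless payoff for a BUY target of size $a > 0$ at state $x$ is given by equation (\ref{sandwich-marginal}) as $(0, P_\Lambda'(x+a) - P_\Lambda'(x))$. Strict convexity of $P_\Lambda$ means $P_\Lambda'$ is strictly increasing, so this second coordinate is strictly positive. The finite sandwich payoff (\ref{sandwich-payoff}) at perturbation $\epsilon > 0$ is $\epsilon$ times the marginal quantity plus an $O(\epsilon^2)$ correction, so for small enough $\epsilon > 0$ we obtain strictly positive pure profit. For a SELL target ($a < 0$), a symmetric argument with $\epsilon < 0$ (i.e.\ the sandwicher opens with a sell of the num\'eraire and closes with a buy) applies: using condition (3) with the roles reversed, $P_\Lambda(x+\epsilon+a) - P_\Lambda(x+a) > P_\Lambda(x+\epsilon) - P_\Lambda(x)$ still holds by strict convexity, so we again obtain positive payoff.

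The main obstacle I anticipate is not analytic but bookkeeping: keeping the sign conventions straight between the orientation of $P_\Lambda$ (which is decreasing), the sign of the order $a$, the sign of the sandwicher's perturbation $\epsilon$, and the assignment of BUY/SELL to each of the two assets. Once these are fixed, each implication reduces to a one-line consequence of a standard lemma, but a careless sign flip would invert the sandwich direction. I would therefore open the proof by fixing explicit conventions matching (\ref{sandwich-payoff}) and then dispatch the three implications in the order (2)$\Leftrightarrow$(3), (1)$\Leftrightarrow$(2), sandwich.
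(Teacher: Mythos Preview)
Your proposal is correct and follows essentially the same route as the paper: identifying $W_\Lambda$ as the graph of $P_\Lambda$ for (1)$\Leftrightarrow$(2), citing the standard forward-difference characterisation of convexity for (2)$\Leftrightarrow$(3), and using the marginal payoff formula \eqref{sandwich-marginal} for the sandwich claim. You are in fact more thorough than the paper, which stops at the positivity of the marginal and does not explicitly pass to finite $\epsilon$ or treat the SELL case; note that you could shortcut your Taylor argument by applying condition (3) directly to the finite payoff \eqref{sandwich-payoff}, which is already the difference of two forward increments and hence positive for \emph{every} $\epsilon$ of the correct sign, not just small ones.
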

\begin{proof}

  The equivalence of 1 and 2 is clear as $W_\Lambda$ is the graph of $P$. For $3\Rightarrow 2$, divide through by $a$ and let $a\rightarrow 0_+$ to find that $P'$ is (strictly) increasing, that is $P''>0$. The converse implication is a general property of convex functions.
  
  For the final statement suppose we have a trade $\tau(a)$ executing at some location in a block. The marginal profit from inserting the memoryless sandwich with size $\epsilon$ at exactly that position is given by equation \eqref{sandwich-marginal}; in particular, it is positive for any $x$. \qedhere
  
\end{proof}

\begin{theorem}
\label{sandwich-existence}

  Let $\widetilde\msg\subseteq\msg$ be a set of messages that includes all swaps on $C$ with originator $S$. 
  Let $L>0$, and suppose $S$ has an $A$-balance of at least $L$.
  Let $U:\states_{S,\{A,B\}}=(0,\infty)^2\rightarrow\R$ be a strictly monotone increasing utility function.
  Let $o\in \widetilde\msg$ be a market order with originator $P\neq S$ to buy $A$ in terms of $B$, and let $B\stackrel{\$}{\in}\blocks\left(\widetilde\msg\right)$ be a random block satisfying the conditions:
  \begin{enumerate}
    \item $B$ is weakly rational w.r.t.~$U$.
    \item $\ell(B)\leq \beta$.
  \end{enumerate}
  %
  Then given $o\in B$, the probability of at least one of the following events occurring is unity:
  \begin{enumerate}
    \item $\ell(B)\geq \beta-1$;
    \item the volume of injected swaps is at least $L$;
    \item $o$ is sandwiched.
  \end{enumerate}
  
\end{theorem}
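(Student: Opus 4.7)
The approach is proof by contradiction: I would suppose $o\in B$ but none of the three listed events holds, and then exhibit an alternative block $B'$ whose terminal utility strictly exceeds that of $B$, contradicting (weak) rationality. From the failure of event~1 I get $\ell(B)\leq \beta-2$, leaving room for two further messages; from the failure of event~2, the $A$-volume $V$ of injected $S$-originated swaps on $C$ inside $B$ is strictly less than $L$, so $S$ retains at least $L-V>0$ units of $A$ available for new injections; from the failure of event~3, no two messages of $B$ flank $o$ as a sandwich by $S$.

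Next I would construct the dominating block. Write $B=B_-\cat o\cat B_+$ and, for small $\epsilon>0$, let $\tau_-(\epsilon)$ be a swap on $C$ originated by $S$ of magnitude $\epsilon$ on the same side as $o$, with $\tau_+(\epsilon)\defeq -\tau_-(\epsilon)$; set
\[
  B'(\epsilon)\defeq B_-\cat \tau_-(\epsilon)\cat o\cat \tau_+(\epsilon)\cat B_+.
\]
This has length at most $\beta$, and both injected trades lie in $\widetilde{\msg}$ by the hypothesis that $\widetilde{\msg}$ contains every swap on $C$ by $S$. The memoryless property recorded after \eqref{sandwich-payoff} implies that the reserves of $C$ at the execution time of every message in $o\cat B_+$ coincide under $B$ and $B'(\epsilon)$, so no other message's payoff or feasibility is disturbed; and for $\epsilon\in(0,L-V)$ small enough, $S$'s $A$-balance at the firing of $\tau_-(\epsilon)$ covers the trade. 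The preceding proposition --- for which I would read strict quasi-concavity of the invariant $f$ into the CFMM data --- then says the injected memoryless sandwich pays $S$ exactly $(0,\Delta B)$ with $\Delta B>0$. All other participants' terminal balances are unchanged, so strict monotonicity of $U$ on $(0,\infty)^2$ gives $U(B'(\epsilon)\cdot\phi_0)>U(B\cdot\phi_0)$, contradicting rationality of $B$.

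The main obstacle is the interpretational bookkeeping around the phrase ``volume of injected swaps'' and $S$'s intermediate balance. One has to pin down precisely which injected swaps count towards $V$ (the natural reading for the argument being the $A$-volume of $S$-originated swaps on $C$ injected into $B$), and verify that no message of $B_+$ is rendered infeasible by the intermediate balance swing caused by $\tau_-(\epsilon)$; finiteness of $B$ and the openness of feasibility conditions on the state space reduce the latter to taking $\epsilon$ small enough. A secondary point is that strict quasi-concavity of $f$, needed to invoke the preceding proposition, is not listed explicitly in the theorem's hypotheses and must be tacitly assumed --- without it, no pure-profit sandwich need exist and the construction collapses.
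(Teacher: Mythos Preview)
Your argument is correct and follows essentially the same route as the paper: assume $o\in B$ is not sandwiched and the injected volume is below $L$, then insert a memoryless sandwich around $o$ to obtain a strictly dominating block, contradicting rationality. The only cosmetic difference is that the paper fixes the sandwich size at $\epsilon=L-V$ outright, whereas you take $\epsilon$ small; your version is in fact tidier in that you make explicit the role of event~1 (two free slots) and flag the tacit quasi-concavity assumption, both of which the paper's proof leaves implicit.
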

\begin{proof}

  Suppose $B\phi_0=(x,y)$.
  If $o\in B$ and the volume $V$ of injected swaps is strictly less than $L$, then the memoryless sandwich of $o$ with sandwicher $S$ and size $\epsilon = L-V$:
  \begin{itemize}
    \item would execute at any position in $B$ (i.e.~$S$ has sufficient balance);
    \item would yield a pure profit in $B$ tokens (and $0$ $A$ tokens), and hence, by monotonicity of $U$, a strictly greater utility than not sandwiching.
  \end{itemize}
  Hence by weak rationality, the probability of $o$ not being sandwiched vanishes.
  \qedhere
  
\end{proof}

With exponential block times as in Example \ref{exponential-block-time}, the event $\ell(B_0)\geq\beta-1$ has probability $1-p^{\beta-1}$.
This quantity is small for all but the highest arrivals rates --- for example, with $\beta=200$ and $\lambda/\mu\beta=0.99$ it is about $0.02$.
So in most cases, we are left with either cases 2 or 3.
If the scheduler operator is well-capitalised, so that $L\gg0$, case 2 also becomes unlikely.
Hence when traffic is not maxed out and the adversary is very well captalised, the probability that $o$ gets sandwiched when it is included into a block is high.

\begin{remark}[How realistic are our utility assumptions?] \label{exotic-derivative}

  Corollary \ref{sandwich-existence} holds under the assumption that the sandwicher's utility delta depends only on the keys $(A,P)$ and $(B,P)$.
  If we `tokenise' all utility deltas, this can be phrased as saying that $S$ does not hold any asset other than $A$ and $B$ whose value depends on the keys $(A,P),(B,P),(A,C),(B,C)$.
  This assumption can be weakened in a few ways:
  \begin{itemize}
    \item 
      Any asset in the portfolio of $S$ has value which is monotone increasing in the balances $\Phi_{A,S},\Phi_{B,S}$.
      (The idea of a derivative of the \emph{balance} a particular address has of a token is quite exotic, but it certainly could exist in principle.)
    \item 
      $S$ may hold assets with dependence on $\Phi_M$ through the \emph{current price}.
  \end{itemize}
  Realistic assets that might violate these hypotheses include volatility derivatives which could in principle depend on prices at various points within the block.
  
  Another one could be some kind of social insurance credit which is invalidated above a certain income or wealth level.
  
\end{remark}

\section{Conclusion}

We have described the outline of a model of a blockchain mempool to which methods from both game or decision theory and queueing theory can be applied, and discussed two simplified limits at a level of detail that yields actual predictions.
With this type of model, protocol designers at the dapp or blockchain level can make predictions about the performance of their applications and hence deliver more accurate guarantees to users.

While these limits by themselves are unrealistic in some ways, it is easy to see where we can begin improving them: more refined approaches to arrivals with some interdependence between orders, incomplete information environments for schedulers, imposition of more interesting verifiable sequencing rules, more careful treatment of special positions such as top of block, and so on.
The real test of our approach will be in applying it to less familiar queueing environments such as those discussed in \S\S\ref{other-work},\ref{future-directions}.

\nocite{angeris2019analysis}
\nocite{angeris2020improved}

\printbibliography

\end{document}